\titleformat{\chapter}[display]
{\normalfont\huge\bfseries}{\chaptertitlename\\thechapter}{20pt}{\Huge}
\titleformat{\subsubsection}[runin]
{\normalfont\normalsize\bfseries}{\thesubsubsection}{1em}{}
\titleformat{\paragraph}[runin]
{\normalfont\normalsize\bfseries}{\theparagraph}{1em}{}
\titleformat{\subparagraph}[runin]
{\normalfont\normalsize\bfseries}{\thesubparagraph}{1em}{}
\titlespacing*{\chapter} {0pt}{50pt}{40pt}
\titlespacing*{\section} {0pt}{3.5ex plus 1ex minus .2ex}{2.3ex plus .2ex}
\titlespacing*{\subsection} {0pt}{3.25ex plus 1ex minus .2ex}{1.5ex plus .2ex}
\titlespacing*{\subsubsection}{0pt}{3.25ex plus 1ex minus .2ex}{1.5ex plus .2ex}
\titlespacing*{\paragraph} {0pt}{3.25ex plus 1ex minus .2ex}{1em}
\titlespacing*{\subparagraph} {\parindent}{3.25ex plus 1ex minus .2ex}{1em}
\subjclass[2010]{Primary 14R15, Secondary 13F20, 13M10}
\newtheorem{theorem}{Theorem}[section]
\newtheorem{lemma}[theorem]{Lemma}
\newtheorem{proposition}[theorem]{Proposition}
\theoremstyle{definition}
\theoremstyle{remark}
\newtheorem{remark}[theorem]{Remark}
\DeclareMathOperator{\Char}{Char}
\DeclareMathOperator{\Jac}{Jac}
\DeclareMathOperator{\Frac}{Frac}
\DeclareMathOperator{\Spec}{Spec}
\DeclareMathOperator{\Img}{Img}
\DeclareMathOperator{\Ker}{Ker}
\begin{document}
\title{Special cases of the Jacobian conjecture}
\author{Vered Moskowicz}
\address{Department of Mathematics, Bar-Ilan University, Ramat-Gan 52900, Israel.}
\email{vered.moskowicz@gmail.com}
\thanks{The author was partially supported by an Israel-US BSF grant \#2010/149}

\begin{abstract}
The famous Jacobian conjecture asks if a morphism $f:K[x,y]\to K[x,y]$
that satisfies
$\Jac(f(x),f(y))\in K^*$ is invertible
($K$ is a characteristic zero field).
We show that if one of the following three equivalent conditions is satisfied, then $f$ is invertible: \begin{itemize}
\item $K[f(x),f(y)][x+y]$ is normal.
\item $K[x,y]$ is flat over $K[f(x),f(y)][x+y]$.
\item $K[f(x),f(y)][x+y]$ is separable over $K[f(x),f(y)]$.
\end{itemize}
\end{abstract}

\maketitle

\section{Introduction}
Throughout this paper, $K$ is an algebraically closed field of characteristic zero
(sometimes there is no need to assume that $K$ is algebraically closed),
the field of fractions of an integral domain $R$ is denoted by $\Frac(R)$,
$f:K[x,y]\to K[x,y]$ is a morphism that satisfies
$\Jac(f(x),f(y)) \in K^*$,
and 
$P:= f(x)$, $Q:= f(y)$.
$\Jac(P,Q) \in K^*$ implies that $P$ and $Q$ are algebraically independent over $K$
(see, for example, \cite[Proposition 6A.4]{rowen}), hence 
$K[P,Q]$ is isomorphic to the $K$-algebra of polynomials in two commuting indeterminates.
$P,Q$ and $x$ are algebraically dependent over $K$, hence $x$ is algebraic over 
$K[P,Q] \subset K(P,Q)$. Similarly, $y$ is algebraic over $K(P,Q)$. Therefore,
$K(P,Q) \subseteq K(P,Q)(x,y)= K(x,y)$ is a finite field extension.
Since $\Char(K)=0$, $K(P,Q) \subseteq K(x,y)$ is a separable field extension.
Apply the primitive element theorem to the finite separable field extension
$K(P,Q) \subseteq K(x,y)$, and get that there exists $w \in K(x,y)$ such that 
$K(x,y)= K(P,Q)(w)$; such $w$ is called a primitive element for the extension.    
A standard proof of the primitive element theorem which does not use Galois theory (see, for example, 
\cite[Theorem 1]{web}) shows that
$K(x,y)=K(P,Q)(x+\lambda y)$, $\lambda \in K(P,Q)$, for all but finitely many choices of 
$\lambda \in K(P,Q)$. So
$K(x,y)=K(P,Q)(x+\lambda y)$ for infinitely many $K \ni \lambda$'s; we call such $\lambda$'s
``good".

\begin{proposition}\label{x+y}
$K(x,y)= K(P,Q)(x+y)$.
\end{proposition}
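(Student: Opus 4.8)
The plan is to prove \emph{directly} that $x+y$ is a primitive element for $K(x,y)/K(P,Q)$, rather than arguing that $\lambda=1$ is among the ``good'' values: the Jacobian hypothesis is exactly what singles out this value, since for a general intermediate field of $K(x,y)/K$ no prescribed linear form need be primitive.

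First I would set up the contradiction. Suppose $K(P,Q)(x+y)\subsetneq K(x,y)$. Since $K(x,y)=K(P,Q)(x+y)(x)$ and $K(P,Q)\subseteq K(x,y)$ is separable, the extension $K(P,Q)(x+y)\subseteq K(x,y)$ is finite separable of degree $\ge 2$, so it admits a nontrivial embedding $\iota\colon K(x,y)\hookrightarrow\Omega$ ($\Omega$ an algebraic closure) fixing $K(P,Q)(x+y)$ pointwise. Set $\delta:=\iota(x)-x$; then $\iota(y)=(x+y)-\iota(x)=y-\delta$, and $\delta\ne 0$ (otherwise $\iota$ fixes $x$ and $y$, hence is the identity). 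Because $\iota$ fixes $K(P,Q)$ and is a $K$-algebra homomorphism, applying it to $P=P(x,y)$ and $Q=Q(x,y)$ yields the identities
\[
P(x+\delta,\,y-\delta)=P(x,y),\qquad Q(x+\delta,\,y-\delta)=Q(x,y)
\]
for this single nonzero $\delta\in\Omega$.

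The second step is to read this off the line $x+y=\text{const}$. Put $c:=x+y$ and $F:=K(c)$, over which $x$ is transcendental and $K(x,y)=F(x)$. Introduce $\widehat P(W):=P(W,\,c-W)$ and $\widehat Q(W):=Q(W,\,c-W)$ in $F[W]$, so $\widehat P(x)=P$, $\widehat Q(x)=Q$. Restricting $\iota$ to $F(x)$ gives a nontrivial $F$-embedding (it fixes $c=x+y$) that also fixes $\widehat P(x)=P$ and $\widehat Q(x)=Q$, hence fixes $F(\widehat P(x),\widehat Q(x))=K(P,Q,x+y)$; therefore $[F(x):F(\widehat P(x),\widehat Q(x))]\ge 2$. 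Transporting this along $F(x)\cong F(W)$, $x\mapsto W$, the polynomial map $\varphi:=(\widehat P,\widehat Q)\colon\mathbb A^1_F\to\mathbb A^2_F$ has degree $\ge 2$ onto its image. Here is where $\Jac(P,Q)\in K^{*}$ enters: $\varphi'(W)=\bigl((P_x-P_y)(W,c-W),\,(Q_x-Q_y)(W,c-W)\bigr)$, and if both entries vanished at some $W_0$, then $P_x=P_y$ and $Q_x=Q_y$ at $(W_0,c-W_0)$, forcing $\Jac(P,Q)=P_xQ_y-P_yQ_x=0$ there -- impossible. Hence $\varphi'$ vanishes nowhere; $\varphi$ is an immersion.

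The whole proposition then rests on the step I expect to be the main obstacle -- the lemma that \emph{a polynomial map $\mathbb A^1\to\mathbb A^2$ over a field of characteristic zero whose derivative vanishes nowhere is birational onto its image}. Granting it, $\deg\varphi=1$, contradicting the previous step. For the lemma I would argue via Riemann--Hurwitz: factor $\varphi$ through the (smooth, affine) normalization $C$ of its image curve, complete to $\psi\colon\mathbb P^1\to\overline C$ of degree $m$; then $\overline C\cong\mathbb P^1$, the set $\overline C\smallsetminus C$ reduces to the single point $\psi(\infty)$, over which $\psi$ is totally ramified (contributing $m-1$), leaving ramification of degree $m-1$ over points of $C$; if $m\ge 2$ this produces a point of $\mathbb A^1$ at which $\varphi'$ vanishes -- a contradiction. (One may instead first specialize $c$ to a generic element of $K$ and apply the lemma over the algebraically closed field $K$, since the generic degree of $\varphi$ and the non-vanishing of $\varphi'$ are preserved. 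The degenerate cases $P\in K[x+y]$ or $Q\in K[x+y]$ need no separate treatment -- the argument and lemma apply verbatim, the image then being a line parametrized linearly -- and the image is never a single point because $P,Q$ are algebraically independent over $K$.)
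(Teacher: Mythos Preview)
Your argument is correct. Restricting $(P,Q)$ to the line $x+y=c$ and recasting the question as whether the polynomial map $\varphi=(\widehat P,\widehat Q)\colon\mathbb A^1_F\to\mathbb A^2_F$ is birational onto its image is a clean reduction, and the Riemann--Hurwitz step is sound: once $\overline C\cong\mathbb P^1$ and $\psi^{-1}(\overline C\smallsetminus C)=\{\infty\}$, total ramification at $\infty$ accounts for $m-1$ of the $2m-2$ in the ramification divisor, so if $m\ge2$ some point of $\mathbb A^1$ carries ramification, forcing $d\psi$ and hence $\varphi'$ to vanish there---contradicting the Jacobian hypothesis.

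This is, however, a genuinely different route from the paper's. The paper gives no argument: it simply invokes an exercise in Brown's primitive-element notes asserting that $\lambda=1$ lies among the admissible values, and stops. Your proof is self-contained and makes explicit \emph{how} the Jacobian condition enters---it is exactly what prevents $(\widehat P',\widehat Q')$ from vanishing simultaneously, so that the parametrization is an immersion and Riemann--Hurwitz forces degree one. The paper's approach buys brevity at the cost of depending on an external, unelaborated reference; yours buys a transparent geometric explanation of why this particular primitive element works. One small remark: the specialization alternative you offer (sending $c$ to a generic $c_0\in K$) is not strictly needed, since the Riemann--Hurwitz count already goes through over $F=K(c)$---$\overline C$ has an $F$-rational point (the image of $0$), and the leftover ramification of degree $m-1$ on $\mathbb A^1$ is nonempty over $\overline F$ once $m\ge2$---but including it does no harm.
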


\begin{proof}
This is just \cite[Exercise]{web} which claims that $\lambda=1$ is one of the infinitely many 
``good" $\lambda$'s.
\end{proof}

For convenience, we shall always work with $x+y$ as a primitive element, 
though we could have taken any other ``good" $\lambda \in K$.
(Without \cite[Exercise]{web}, one takes any ``good" $\lambda$, denote it $\lambda_0$,
and works with $K[P,Q][x+\lambda_0y]$ instead of $K[P,Q][x+y]$).

%%%%%%%%%%%%%%%%%%%%%%%%%%%%%%%%%%%%%%%%%%%%%%%%%%%%%%%%%%%%%%%%%%%%%%

\section{Preliminaries}
Recall the following important theorems:

\textbf{Bass's theorem} \cite[Proposition 1.1]{bass2}: Assume that $K[x_1,x_2] \subseteq B$ 
is an affine integral domain over $K$ which is an unramified extension of $K[x_1,x_2]$.
Assume also that $B=K[x_1,x_2][b]$ for some $b \in B$. If $B^*=K^*$ then $B=K[x_1,x_2]$.

\textbf{Formanek's theorem} \cite[Theorem 1]{formanek}, see also \cite[page 13, Exercise 9]{essen},
which is true for any characteristic zero field $K$, not necessarily algebraically closed: 
If $F_1, \ldots, F_n \in K[x_1, \ldots, x_n]$ 
satisfy $\Jac(F_1, \ldots, F_n) \in K^*$ and there exists 
$G \in K[x_1, \ldots, x_n]$ such that 
$K[F_1, \ldots, F_n, G]= K[x_1, \ldots, x_n]$,
then $K[F_1, \ldots, F_n]= K[x_1, \ldots, x_n]$.
In particular, take $n=2$ in Formanek's theorem and get:
Let $K$ be a characteristic zero field.
If $F_1, F_2 \in K[x,y]$ satisfy $\Jac(F_1, F_2) \in K^*$ 
and there exists $G \in K[x,y]$ such that $K[F_1, F_2, G]= K[x,y]$,
then $K[F_1, F_2]= K[x,y]$.

(A special case of) \textbf{Adjamagbo's transfer theorem} \cite[Theorem 1.7]{adja.essen}: Given commutative rings $A \subseteq B \subseteq C$ such that: $A$ is normal and Noetherian,
$B$ is isomorphic to $A[T]/hA[T]$, where $A[T]$ is the $A$-algebra of polynomials generated by one indeterminate $T$ and $h \in A[T]-A$, $C$ an affine $B$-algebra, $C$ is separable over $A$,
$C^*=A^*$ and $\Spec(C)$ is connected.
Then the following conditions are equivalent:\begin{itemize}
\item $B$ is normal.
\item $C$ is flat over $B$.
\item $B$ is separable over $A$.
\item $B$ is \'{e}tale (=unramified and flat) over $A$.
\end{itemize}

%%%%%%%%%%%%%%%%%%%%%%%%%%%%%%%%%%
\begin{lemma}\label{Keller}
Each of the following special cases implies that $f$ is invertible: \begin{itemize}
\item [(1)] $K[P,Q][x+y]=K[x,y]$.
\item [(2)] $K[P,Q][x+y]=K[P,Q]$.
\end{itemize}
\end{lemma}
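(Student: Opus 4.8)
The plan is to handle the two cases separately, reducing case~(1) to Formanek's theorem and case~(2) to Bass's theorem. In both cases I will first use that $P,Q$ are algebraically independent over $K$ (noted in the introduction), so that the $K$-algebra homomorphism $f$ is injective; consequently ``$f$ is invertible'' is equivalent to $f$ being surjective, i.e. to $K[P,Q]=K[x,y]$.

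For case~(1), the hypothesis $K[P,Q][x+y]=K[x,y]$ reads $K[P,Q,x+y]=K[x,y]$, which is exactly the shape of Formanek's hypothesis with $n=2$, $F_1=P$, $F_2=Q$ and $G=x+y$; since also $\Jac(P,Q)\in K^*$, Formanek's theorem immediately gives $K[P,Q]=K[x,y]$. I do not expect any obstacle here: case~(1) is little more than a rephrasing of the input of Formanek's theorem.

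For case~(2), the key manoeuvre is to turn the hypothesis $K[P,Q][x+y]=K[P,Q]$, i.e. $x+y\in K[P,Q]$, into the statement that $K[x,y]$ is generated over $K[P,Q]$ by a \emph{single} element: from $x+y\in K[P,Q]$ one has $y=(x+y)-x\in K[P,Q][x]$, hence $K[x,y]=K[P,Q][x,y]=K[P,Q][x]$. This is what opens the door to Bass's theorem, whose remaining hypotheses I will then verify for the extension $K[P,Q]\subseteq K[x,y]$: it is an affine integral domain over $K$; $K[x,y]^*=K^*$ since the units of a polynomial ring over a field are the nonzero constants; and it is unramified over $K[P,Q]$ because $\Jac(P,Q)\in K^*$. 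For the last point, $\Omega_{K[x,y]/K}$ is free on $dx,dy$, and $dP,dQ$ are obtained from $dx,dy$ by the Jacobian matrix, whose determinant $\Jac(P,Q)$ is a unit of $K[x,y]$; hence $dP,dQ$ also generate $\Omega_{K[x,y]/K}$, so $\Omega_{K[x,y]/K[P,Q]}=0$. Bass's theorem then yields $K[x,y]=K[P,Q]$.

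The one step that calls for a little care is this unramifiedness computation in case~(2) --- and, relatedly, reading ``unramified extension'' in Bass's theorem as ``$\Omega_{B/A}=0$ together with finite type'', which is precisely what the Jacobian condition provides. Everything else (the one-generator reduction, the identification $K[x,y]^*=K^*$, and the final translation of $K[P,Q]=K[x,y]$ into invertibility of $f$) is routine, so I expect no genuine obstacle in either case: the real content of the lemma is the observation that each of the two hypotheses is tailored to meet the input requirements of Formanek's, respectively Bass's, theorem.
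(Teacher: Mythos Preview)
Your proof is correct. Case~(1) matches the paper exactly. Case~(2), however, takes a genuinely different route.

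The paper's argument for~(2) goes through fields of fractions: from $K[P,Q][x+y]=K[P,Q]$ it takes $\Frac$ of both sides and combines with Proposition~\ref{x+y} (that $x+y$ is a primitive element for $K(x,y)$ over $K(P,Q)$) to conclude $K(x,y)=K(P,Q)$; then it invokes Keller's theorem (the known birational case of the Jacobian conjecture) to get $K[x,y]=K[P,Q]$. Your argument instead stays at the level of rings: from $x+y\in K[P,Q]$ you deduce $K[x,y]=K[P,Q][x]$ and then apply Bass's theorem. Your approach avoids Proposition~\ref{x+y} and Keller's theorem entirely, at the cost of verifying the unramifiedness hypothesis of Bass.

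One remark: having reached $K[x,y]=K[P,Q][x]$, you do not actually need Bass's theorem. You are again in the exact shape of Formanek's hypothesis (with $G=x$ this time), so Formanek's theorem alone finishes~(2) just as it did~(1), and the unramifiedness computation becomes unnecessary. This makes your route strictly lighter than both your own write-up and the paper's, since it uses neither Bass's theorem, nor Keller's theorem, nor the primitive element fact.
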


\begin{proof}
\begin{itemize}
\item [(1)] From Formanek's theorem we get $K[x,y]=K[P,Q]$.
\item [(2)] 
$$
K(x,y)= K(P,Q)(x+y)= \Frac(K[P,Q][x+y])= \Frac(K[P,Q])= K(P,Q).
$$
The first equality follows from Proposition \ref{x+y}, the others are obvious.
Then Keller's theorem \cite[Corollary 1.1.35]{essen} (see also \cite[Theorem 2.1]{bass}) 
says that $K[x,y]=K[P,Q]$.
\end{itemize}
\end{proof}

%%%%%%%%%%%%%%%%%%%%%%%%%%%%%%%%%%%%%%%%%%%%%%%%%%%%%%%%%%%%%%%%%%%%%%
\section{Main Theorem}
Recall the following well-known results: \begin{itemize}
\item [(1)] $K[P,Q]$ and $K[x,y]$ are normal.
\item [(2)] $K[x,y]$ is flat over $K[P,Q]$ (``flatness of Keller maps"),
see \cite[Theorem 38]{wang} or \cite[Corollary 1.1.34]{essen}. 
(There exist commutative rings $A \subseteq B \subseteq C$ such that
$C$ is faithfully flat over $A$, $B$ is faithfully flat over $A$,
but $C$ is not flat over $B$; for more details, see \cite[D.1.4 and Exercise D.2.4]{essen} and 
\cite[page 49, Example]{mat}. In our case 
$K[P,Q] \subseteq K[P,Q][x+y] \subseteq K[x,y]$, we do not 
even know if $K[x,y]$ is faithfully flat over $K[P,Q]$ or if $K[P,Q][x+y]$ is faithfully flat over 
$K[P,Q]$). 
\item [(3)] $K[x,y]$ is separable over $K[P,Q]$, see \cite[Theorem 7, Theorem 38]{wang},
\cite[Proposition 1.10]{wright} and \cite[pages 295-296]{bass}.
Notice that separability of $K[x,y]$ over $K[P,Q]$ implies separability of $K[x,y]$ over 
$K[P,Q][x+y]$, see \cite[page 92 (13)]{adja.essen}.
\end{itemize}

\begin{theorem}\label{main thm}
If one of the following equivalent conditions is satisfied, then $f$ is invertible:
\begin{itemize}
\item [(1)] $K[P,Q][x+y]$ is normal.
\item [(2)] $K[x,y]$ is flat over $K[P,Q][x+y]$.
\item [(3)] $K[P,Q][x+y]$ is separable over $K[P,Q]$.
\end{itemize}
\end{theorem}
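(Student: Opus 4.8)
The plan is to apply Adjamagbo's transfer theorem to the chain $A := K[P,Q] \subseteq B := K[P,Q][x+y] \subseteq C := K[x,y]$, which at once proves the equivalence of (1), (2), (3) and adjoins a fourth equivalent condition, namely that $B$ is \'etale over $A$. Granting one of (1), (2), (3), I would then feed the resulting unramifiedness of $B$ over $A$ into Bass's theorem to conclude $K[P,Q][x+y] = K[P,Q]$, which is case (2) of Lemma \ref{Keller} and hence makes $f$ invertible.

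First one must check the hypotheses of Adjamagbo's transfer theorem for $A \subseteq B \subseteq C$. Among the recalled results, $A = K[P,Q]$ is normal and Noetherian, and $C = K[x,y]$ is separable over $A$. Moreover $C^* = K^* = A^*$ since $K[P,Q]$ and $K[x,y]$ are polynomial rings over $K$; $\Spec(C)$ is connected because $C$ is a domain; and $C = B[x]$ is generated over $B$ by the single element $x$ (since $y = (x+y) - x$), hence is an affine $B$-algebra. The one point needing an argument is that $B \cong A[T]/hA[T]$ with $h \in A[T] - A$: the surjection $A[T] \twoheadrightarrow B$, $T \mapsto x+y$, has kernel a prime $\mathfrak{p}$ with $\mathfrak{p} \cap A = 0$ and $\mathfrak{p} \neq 0$ (as $x+y$ is algebraic over $\Frac(A) = K(P,Q)$ by Proposition \ref{x+y}); since $A[T] = K[P,Q][T]$ is a three-dimensional polynomial ring over $K$ and $A[T]/\mathfrak{p} \cong B$ is a two-dimensional affine domain, $\mathfrak{p}$ has height one, hence is principal because $A[T]$ is a UFD, say $\mathfrak{p} = hA[T]$; and $h \notin A$ because a nonzero element of $A$ is a nonzero constant of $B$ and so cannot lie in $\mathfrak{p}$.

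Adjamagbo's transfer theorem then yields that (1), (2), (3) are equivalent and each is equivalent to ``$B$ is \'etale over $A$'', in particular to ``$B$ is unramified over $A$''. Assume now that one (hence each) of these conditions holds. Then $B = K[P,Q][x+y]$ is an affine integral domain over $K$, unramified over the polynomial ring $K[P,Q]$, of the form $K[P,Q][b]$ with $b = x+y$, and $B^* = K^*$ since $B \subseteq K[x,y]$ and $K[x,y]^* = K^*$. Bass's theorem therefore gives $K[P,Q][x+y] = K[P,Q]$, which is hypothesis (2) of Lemma \ref{Keller}; hence $f$ is invertible.

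The main obstacle is the verification of Adjamagbo's hypotheses --- above all the presentation $B \cong A[T]/hA[T]$ with $h \notin A$ --- together with confirming that the separability of $K[x,y]$ over $K[P,Q]$ recorded in the recalled results is precisely the ``$C$ separable over $A$'' demanded by Adjamagbo. Once the transfer theorem applies, the deduction via Bass's theorem and Lemma \ref{Keller} is immediate.
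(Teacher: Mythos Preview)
Your proof is correct and follows essentially the same route as the paper: verify the hypotheses of Adjamagbo's transfer theorem for $K[P,Q]\subseteq K[P,Q][x+y]\subseteq K[x,y]$, deduce that the three conditions are equivalent and imply \'etaleness (hence unramifiedness) of $K[P,Q][x+y]$ over $K[P,Q]$, then apply Bass's theorem and Lemma~\ref{Keller}(2). Your justification of $B\cong A[T]/hA[T]$ via the height-one prime in the UFD $A[T]$ is in fact more detailed than the paper's, which simply takes $h$ to be the minimal polynomial of $x+y$ over $K[P,Q]$; you also make explicit the check $C^*=A^*$, which the paper leaves implicit.
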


\begin{proof}
The commutative rings $K[P,Q] \subseteq K[P,Q][x+y] \subseteq K[x,y]$ satisfy all the conditions in Adjamagbo's theorem:
$K[P,Q]$ is isomorphic to the $K$-algebra of polynomials in two commuting indeterminates, hence it is normal (a UFD is normal) and Noetherian. $K[P,Q][x+y]$ is isomorphic to 
$K[P,Q][T]/hK[P,Q][T]$, where $h \in K[P,Q][T]-K[P,Q]$ is the minimal polynomial of $x+y$
over $K[P,Q]$. $K[x,y]=K[P,Q][x+y][y]$ is an affine 
$K[P,Q][x+y]$-algebra. $K[x,y]$ is separable over $K[P,Q]$, as was already mentioned.
$\Spec(K[x,y])$ is connected, since the prime spectrum of any integral domain is.

{}From Adjamagbo's theorem, the three conditions are indeed equivalent, and are also equivalent to 
$K[P,Q][x+y]$ being \'{e}tale over $K[P,Q]$; in particular 
$K[P,Q][x+y]$ is unramified over $K[P,Q]$. 
Now apply Bass's theorem to $K[P,Q] \subseteq K[P,Q][x+y]$ and get that
$K[P,Q][x+y]=K[P,Q]$. By $(2)$ of Lemma \ref{Keller} $f$ is invertible.
\end{proof}
%%%%%%%%%%%%%%%%%%%%%%%%%%%%%%%%%%%%%%%%%%%%%%%%%%%%%
\section{Special cases of the main theorem}
A special case of Theorem \ref{main thm} when $K[x,y]$ is faithfully flat over 
$K[P,Q][x+y]$, has an easier proof: Recall that if $A$ and $B$ are integral domains, 
$A \subseteq B$, $\Frac(A)= \Frac(B)$, and $B$ is faithfully flat over $A$, then $A=B$ 
(see \cite[Exercise 7.2]{mat}). Apply this to $A=K[P,Q][x+y]$ and $B=K[x,y]$,
and get $K[P,Q][x+y]=K[x,y]$.
By $(1)$ of Lemma \ref{Keller} $f$ is invertible.
%However, we do not know if $K[x,y]$ is flat over $K[P,Q][x+y]$.
%%%%%%%%%%%%%%%%%%%%%%%%%%%%%%%%%%%%%%%%%%%%%%%%%%%%%%%%%%%%%%%%%%%%%%%%%%%
Another special case of Theorem \ref{main thm} is the following:

\begin{theorem}\label{thm regular}
If $K[P,Q][x+y]$ is regular, then $f$ is invertible. 
\end{theorem}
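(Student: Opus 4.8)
The plan is to show that regularity of $R := K[P,Q][x+y]$ forces $R$ to be normal, after which Theorem \ref{main thm}(1) applies directly and we conclude that $f$ is invertible. The key point is the classical fact that a regular Noetherian ring is normal (indeed, a regular local ring is a UFD, hence integrally closed; globalizing, a regular Noetherian integral domain is integrally closed in its fraction field). So the bulk of the argument is just checking that $R$ is a Noetherian integral domain, which is immediate: $R$ is a quotient of the polynomial ring $K[P,Q][T]$ by Adjamagbo's description used in the proof of Theorem \ref{main thm}, hence Noetherian, and it is a subring of the domain $K[x,y]$, hence a domain.

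Concretely, I would first invoke the identification $R \cong K[P,Q][T]/hK[P,Q][T]$ from the proof of Theorem \ref{main thm}, so $R$ is an affine $K$-algebra and in particular Noetherian; and $R$ is an integral domain since $K[P,Q] \subseteq R \subseteq K[x,y]$. Second, I would cite the standard commutative-algebra theorem that a regular Noetherian domain is normal (for instance via the Auslander--Buchsbaum theorem that regular local rings are UFDs, together with the fact that a ring is normal iff all its localizations at primes are normal). Third, with $R$ now normal, condition (1) of Theorem \ref{main thm} is satisfied, so $f$ is invertible.

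I do not expect a genuine obstacle here; the only thing to be slightly careful about is making sure ``regular'' is being used in the sense of ``Noetherian and all localizations at primes are regular local rings'' (the standard meaning for a ring that is not assumed local), so that the passage ``regular $\Rightarrow$ normal'' is legitimate. One could alternatively bypass Theorem \ref{main thm} entirely and argue directly: since $K[x,y]$ is separable over $K[P,Q]$ (recalled in Section 3), $\Spec K[x,y] \to \Spec R$ is separable, and a separable morphism to a regular base with regular source of the same dimension is étale, which would again let one apply Bass's theorem to $K[P,Q] \subseteq R$. But the first route is shorter, so I would present regularity $\Rightarrow$ normality and defer to Theorem \ref{main thm}.
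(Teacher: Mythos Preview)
Your proposal is correct and matches the paper's proof almost exactly: the paper simply cites that a regular ring is normal (Matsumura, Theorem~19.4) and then invokes Theorem~\ref{main thm}(1). Your additional checks that $R$ is Noetherian and a domain are fine but unnecessary here, since the paper's definition of ``regular'' already includes Noetherianness, and $R\subseteq K[x,y]$ is visibly a domain.
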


Recall that a commutative ring is regular if it is Noetherian and the localization at every 
prime ideal is a regular local ring. 
For the definition of a regular local ring see, for example, \cite[pages 104-105]{mat}
and \cite[page 123]{ati}.

\begin{proof}
Recall that a regular ring is normal \cite[Theorem 19.4]{mat}, 
hence $K[P,Q][x+y]$ is normal.
Then Theorem \ref{main thm} implies that $f$ is invertible.
\end{proof}

\begin{remark}\label{remark}
$K[P,Q][x+y]$ is Noetherian: 
$K[P,Q][x+y] \cong K[P,Q,S]/ gK[P,Q,S]$,
where $S$ is transcendental over $K[P,Q]$ and 
$g=g(S) \in K[P,Q][S]$ is minimal such that
$g(x+y)=0$ ($x+y$ is algebraic over $K[P,Q]$).
Hence $K[P,Q][x+y]$ is Noetherian as a homomorphic image of the Noetherian polynomial ring
$K[P,Q,S]$ in three indeterminates $P,Q,S$.
\end{remark}

In view of Theorem \ref{thm regular}, one wishes to show that 
$K[P,Q][x+y]$ is regular. 

If a commutative Noetherian ring $R$ has finite Krull dimension and finite global dimension,
then $R$ is regular and these dimensions are equal, \cite[Theorem 11.2]{may}.

Therefore,
\begin{theorem}
If the global dimension of $K[P,Q][x+y]$ is finite, 
then $f$ is invertible.
\end{theorem}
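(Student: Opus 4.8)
The plan is to reduce this statement to Theorem~\ref{thm regular} via the cited homological criterion for regularity. First I would observe that $R := K[P,Q][x+y]$ is Noetherian; this is precisely the content of Remark~\ref{remark}, where $R \cong K[P,Q,S]/gK[P,Q,S]$ is exhibited as a homomorphic image of the polynomial ring in three indeterminates $P,Q,S$.

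Next I would check that $R$ has finite Krull dimension. Since $R$ is a homomorphic image of $K[P,Q,S]$, whose Krull dimension is $3$, we get $\dim R \le 3 < \infty$. (In fact $R$ is a finitely generated $K$-algebra which is an integral domain with $\Frac(R) = K(x,y)$ of transcendence degree $2$ over $K$, so $\dim R = 2$; but only finiteness is needed here.)

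The hypothesis then provides the remaining ingredient, namely that the global dimension of $R$ is finite. Invoking \cite[Theorem 11.2]{may}, a commutative Noetherian ring of finite Krull dimension and finite global dimension is regular; hence $R = K[P,Q][x+y]$ is regular. Now Theorem~\ref{thm regular} applies directly and yields that $f$ is invertible.

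I do not anticipate any genuine obstacle: the only step requiring (minimal) care is the verification of finite Krull dimension, and that is immediate from Remark~\ref{remark}. The substance of the statement is not the proof but the reduction it records — it recasts this two-variable instance of the Jacobian conjecture as the purely homological question of whether $K[P,Q][x+y]$ has finite global dimension.
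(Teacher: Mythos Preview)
Your proof is correct and follows essentially the same route as the paper: establish Noetherianity (Remark~\ref{remark}), finite Krull dimension, then apply \cite[Theorem 11.2]{may} and Theorem~\ref{thm regular}. The only cosmetic difference is in bounding the Krull dimension---you use that $R$ is a quotient of the three-variable polynomial ring, while the paper invokes \cite[Theorem 5.6]{mat} together with Proposition~\ref{x+y} (or alternatively the inequality $\dim R \le \mathrm{gl.dim}\,R$) to get the exact value $2$.
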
 

\begin{proof}
It is enough to show that $K[P,Q][x+y]$ has finite Krull dimension 
(then \cite[Theorem 11.2]{may} and Theorem \ref{thm regular} imply that $f$ is invertible).

{}From \cite[Theorem 5.6]{mat} and Proposition \ref{x+y}, the Krull dimension of 
$K[P,Q][x+y]$ is $2$. 

(Another way: It is known that the Krull dimension is at most the global dimension. 
Hence if the global dimension is finite so is the Krull dimension).
\end{proof}

We suspect that from 
$K[P,Q] \subseteq K[P,Q][x+y] \subseteq K[x,y]$ and
$K[P,Q][x+y] \cong K[P,Q,S]/ gK[P,Q,S]$,
one must get that the global dimension of $K[P,Q][x+y]$ is $2$.
\cite[Theorem 6.1]{global} tells us that $K[P,Q][x+y]$-modules of \textit{finite} projective dimension 
have projective dimension $\leq 2$
(since otherwise, the polynomial ring in three indeterminates $K[P,Q,S]$ would have a module of projective dimension $\geq 4$, a contradiction).
 
However, it may happen (though we hope that it may not happen) that there exist 
$K[P,Q][x+y]$-modules of infinite projective dimension.  
%%%%%%%%%%%%%%%%%%%%%%%%%%%%%%%%%%%%%%%%%%%%%%%%%%%
In other words, we wish to be able to show that every finitely generated
$K[P,Q][x+y]$-module has finite projective dimension (hence necessarily projective dimension 
$\leq 2$. The option of an infinite increasing chain of finite dimensions is impossible, 
as we have just remarked that a $K[P,Q][x+y]$-module of finite projective dimension must have 
projective dimension $\leq 2$).

We hope that it is possible to show that every finitely generated $K[P,Q][x+y]$-module
has finite projective dimension, namely that every finitely generated $K[P,Q][x+y]$-module
has a finite projective resolution.

A special case of every finitely generated $K[P,Q][x+y]$-module having a 
finite projective dimension is that of every finitely generated $K[P,Q][x+y]$-module 
having a finite free resolution (FFR);
if so, then $K[P,Q][x+y]$ is even a UFD (any UFD is normal) 
by \cite[Theorem 184]{kap} or \cite[Theorem 20.4]{mat}
(=if a Noetherian integral domain $R$ has the property that every finitely generated 
$R$-module has an FFR, then $R$ is a UFD).
%%%%%%%%%%%%%%%%%%%%%%%%%%%%%%%%%%%%%%%%%%%%%%%%%%%%%%%%%%%%%%%%%%%%%%%%%%%%%%%%%%%
%%%%%%%%%%%%%%%%%%%%%%%%%%%%%%%%%%%

We hope that it is possible to show that every finitely generated $K[P,Q][x+y]$-module has an FFR, 
or at least a finite projective resolution; thus far we only managed to show that every finitely generated 
$K[P,Q][x+y]$-module has a finite \textit{complex} of finitely generated and free 
$K[P,Q][x+y]$-modules- we will elaborate on this in Lemma \ref{lemma}.
We hope that this finite complex somehow yields a finite projective resolution (or a finite free resolution). 

Finally, we wonder if the following ``plausible theorem" is true: If a Noetherian integral domain $R$ has the property that every finitely generated $R$-module has a finite complex of finitely generated 
and free $R$-modules, then $R$ is normal.
We do not know if the ``plausible theorem" is true, but if it is true then $f$ is invertible:

\begin{theorem}\label{plausible}
If the ``plausible theorem" is true, then $f$ is invertible.
\end{theorem}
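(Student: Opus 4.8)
The plan is to verify that $R := K[P,Q][x+y]$ satisfies the hypotheses of the ``plausible theorem'' and then to feed its conclusion into Theorem \ref{main thm}. First I would record the easy structural facts: $R$ is an integral domain, being a subring of $K[x,y]$, and $R$ is Noetherian by Remark \ref{remark}, since $R \cong K[P,Q,S]/gK[P,Q,S]$ is a homomorphic image of a polynomial ring in three indeterminates. Thus the only substantive hypothesis to check is that every finitely generated $R$-module admits a finite complex of finitely generated free $R$-modules.

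That hypothesis is exactly the assertion of Lemma \ref{lemma}. Granting it, the ``plausible theorem'' applies to $R$ and yields that $K[P,Q][x+y]$ is normal, which is condition (1) of Theorem \ref{main thm}. Hence Theorem \ref{main thm} gives that $f$ is invertible, and the proof is complete. In other words, once Lemma \ref{lemma} is in hand the deduction is a formality.

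The real difficulty lives upstream, not in this theorem. On the one hand there is Lemma \ref{lemma}: its proof should start from the presentation $R \cong K[P,Q,S]/gK[P,Q,S]$ and the fact that the polynomial ring $K[P,Q,S]$ has global dimension $3$, so that every finitely generated $K[P,Q,S]$-module has a finite free resolution of length at most $3$; the subtlety is that restriction of scalars along $K[P,Q,S] \to R$ destroys freeness, so to obtain a finite complex of \emph{free} $R$-modules one must tensor with, or splice in, the Koszul complex on $g$, or run a change-of-rings argument. On the other hand, the genuine obstacle is the ``plausible theorem'' itself, which we do not know how to prove; the present statement merely records what would follow from it. So the step I expect to be hardest is establishing the ``plausible theorem''—but that is assumed here, and the proof of Theorem \ref{plausible} proper is then immediate.
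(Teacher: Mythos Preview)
Your deduction of Theorem~\ref{plausible} is correct and follows the paper's route exactly: check that $K[P,Q][x+y]$ is a Noetherian integral domain, use Lemma~\ref{lemma} to get the finite-complex property, apply the ``plausible theorem'' to obtain normality, and conclude via Theorem~\ref{main thm}.

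Two small points of clarification. First, your sentence ``That hypothesis is exactly the assertion of Lemma~\ref{lemma}'' is slightly off: Lemma~\ref{lemma} is a general transfer result from $R$ to $R/I$, and to apply it here one must also supply the input that the polynomial ring $K[P,Q,S]$ already has the finite-complex property. The paper does this by citing Kaplansky for the stronger fact that every finitely generated $K[P,Q,S]$-module has an FFR; you mention the equivalent global-dimension-$3$ fact, but you place it inside your discussion of Lemma~\ref{lemma}'s proof rather than as the hypothesis being fed into the lemma. Second, your speculation about how Lemma~\ref{lemma} is proved (Koszul complexes, change-of-rings splicing) is considerably more elaborate than what the paper actually does: it simply takes a finite complex $\{F_j\}$ of free $K[P,Q,S]$-modules and passes to $\{F_j/IF_j\}$, which are free $R/I$-modules, checking that the induced maps still form a complex. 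Exactness is lost in general, which is precisely why one only gets a complex and why the ``plausible theorem'' is needed rather than the FFR criterion for UFDs.
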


In order to prove Theorem \ref{plausible}, we need the following lemma:
\begin{lemma}\label{lemma}
Let $R$ be a commutative ring and $I$ an ideal of $R$. 
Assume $R$ has the property that every finitely generated $R$-module has a finite complex 
of finitely generated and free $R/I$-modules.
Then the quotient ring $R/I$ has the property that every finitely generated $R/I$-module
has a finite complex of finitely generated and free $R/I$-modules.
\end{lemma}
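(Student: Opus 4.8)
The plan is to recognize that the property asked of $R/I$ is simply the special case of the hypothesized property of $R$ applied to those $R$-modules that are annihilated by $I$, together with the elementary fact that every $R/I$-module is such a module. So the lemma reduces to a bookkeeping argument about restriction of scalars along the canonical surjection $\pi\colon R\to R/I$.

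First I would fix an arbitrary finitely generated $R/I$-module $N$. Restricting scalars along $\pi$, I may view $N$ as an $R$-module, and it is still finitely generated over $R$: any finite $R/I$-generating set of $N$ is also an $R$-generating set, since the $R$-action on $N$ factors through $\pi$. Hence the hypothesis on $R$ applies to $N$ and provides a finite complex $C_\bullet$ of finitely generated free $R/I$-modules ``of $N$'', i.e.\ a bounded complex of finitely generated free $R/I$-modules with $H_0(C_\bullet)\cong N\otimes_R R/I$. Since $N$ is an $R/I$-module, $I$ annihilates $N$, so the natural map $N\to N\otimes_R R/I$ is an isomorphism and $N\otimes_R R/I\cong N/IN=N$. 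Therefore $C_\bullet$ is already a finite complex of finitely generated free $R/I$-modules of the $R/I$-module $N$, which is exactly the conclusion; as $N$ was arbitrary, $R/I$ has the stated property.

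The argument is essentially formal, so there is no serious obstacle; the one point that requires care is pinning down what ``a finite complex of finitely generated and free $R/I$-modules of an $R$-module $M$'' means. It cannot mean $H_0(C_\bullet)\cong M$, because the homology of a complex of $R/I$-modules is automatically killed by $I$; the correct reading is $H_0(C_\bullet)\cong M\otimes_R R/I$ (equivalently $M/IM$), which is exactly what the intended construction (a finite free resolution of $M$ over a regular $R$, tensored down to $R/I$) produces. Once this is fixed, the whole content of the lemma is the identification $N\otimes_R R/I\cong N$ for $R/I$-modules $N$, and the passage to Theorem \ref{plausible} is then immediate by taking $R=K[P,Q,S]$, $I=gK[P,Q,S]$, so that $R/I\cong K[P,Q][x+y]$.
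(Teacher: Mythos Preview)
Your argument is correct under the literal reading of the hypothesis, and you have correctly put your finger on the interpretive issue. Note, however, that the hypothesis as printed almost certainly contains a typo: the paper's own proof (and the application in Theorem~\ref{plausible}) uses the assumption that every finitely generated $R$-module has a finite complex of finitely generated free \emph{$R$-modules}, not $R/I$-modules. With that intended hypothesis, the paper's proof is the explicit ``tensor down'' construction: given $0\to F_n\to\cdots\to F_0\to M_R\to 0$ with each $F_j$ finitely generated and free over $R$, one sets $G_j:=F_j/IF_j$, verifies by hand that each $G_j$ is finitely generated and free over $R/I$, defines the induced maps $\beta_j$, and checks that $\Img(\beta_{j+1})\subseteq\Ker(\beta_j)$.

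Your proposal instead takes the statement at face value (free $R/I$-modules already in the hypothesis) and thereby collapses the whole thing to the observation that an $R/I$-module, viewed as an $R$-module by restriction along $\pi$, satisfies $N\otimes_R R/I\cong N$; the lemma then becomes a one-line tautology. Both arguments are internally sound, but only the paper's version matches what is actually available in the application, namely that $K[P,Q,S]$-modules have finite complexes of free $K[P,Q,S]$-modules. You evidently see this --- your parenthetical ``a finite free resolution of $M$ over a regular $R$, tensored down to $R/I$'' \emph{is} the paper's argument --- so the substantive difference is just that the paper writes out that base-change in detail while you fold it into a reinterpreted hypothesis.
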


\begin{proof}
Let $M=M_{R/I}$ be any finitely generated $R/I$-module. We must show that $M_{R/I}$ has a
finite complex of finitely generated and free $R/I$-modules. 
Assume $m_1,\ldots,m_t$ generate $M_{R/I}$ as an $R/I$-module.
$M_{R/I}$ becomes an $R$-module by defining for all $r \in R$ and $m \in M_{R/I}$:
$rm:=(r+I)m$. Denote this new $R$-module by $M_R$. 
Notice that $I$ is in the annihilator of $M_R$. 
Clearly, $M_R$ is finitely generated over $R$ by the same $m_1,\ldots,m_t$.
By our assumption, $M_R$ has a finite complex of finitely generated and free $R$-modules:
$0 \rightarrow F_n  \stackrel{\alpha_n}{\rightarrow} \ldots \stackrel{\alpha_2}{\rightarrow}
F_1 \stackrel{\alpha_1}{\rightarrow} F_0 \stackrel{\alpha_0}{\rightarrow} M_R \rightarrow 0$,
where each $F_i$ is finitely generated and free $R$-module
and $\Img(\alpha_{j+1}) \subseteq \Ker(\alpha_{j})$.

For each $0 \leq j \leq n$ define $G_j:=F_j/IF_j$. 
Each $G_j$ is an $R/I$-module by defining for all 
$r+I \in R/I$ and $g_j=f_j+IF_j \in G_j$:
$(r+I)(f_j+IF_j):= rf_j+IF_j$. 

This is well-defined: If $r+I=r'+I$ and $f_j+IF_j=f'_j+IF_j$, 
write $r'=r+a$ and $f'_j=f_j+f$
where $a \in I$ and $f \in IF_j$.
Then 
$(r'+I)(f'_j+IF_j)= r'f'_j+IF_j=(r+a)(f_j+f)+IF_j= rf_j+(rf+af_j+af)+IF_j= rf_j+IF_j$,
since $rf+af_j+af \in IF_j$.

$F_j$ is a finitely generated $R$-module, hence $G_j$ is a finitley generated
$R/I$-module, with generators images of the generators of $F_j$ via 
$F_j \rightarrow F_j/IF_j$.

$F_j$ is $R$-free, hence $G_j$ is $R/I$-free. 
Indeed, if $F_j$ is a free $R$-module with basis 
$f_{j,1},\ldots,f_{j,s_j}$, then $G_j$ is a free $R/I$-module with basis
$f_{j,1}+IF_j,\ldots,f_{j,s_j}+IF_j$: 
If
$(r_1+I)(f_{j,1}+IF_j)+\ldots+(r_{s_j}+I)(f_{j,s_j}+IF_j)=0+IF_j$,
we must show that $r_1+I=0+I,\ldots,r_{s_j}+I=0+I$.
Clearly, $r_1f_{j,1}+\ldots+r_{s_j}f_{j,s_j} \in IF_j$.

So we can write
$r_1f_{j,1}+\ldots+r_{s_j}f_{j,s_j}= a_1u_1+\ldots+a_lu_l$, with 
$a_1,\ldots,a_l \in I$ and $u_1,\ldots,u_l \in F_j$.

For each $1 \leq i \leq l$, write $u_i=e_{i,1}f_{j,1}+\ldots+e_{i,s_j}f_{j,s_j}$,
where $e_{i,1},\ldots,e_{i,s_j} \in R$.

Then
$r_1f_{j,1}+\ldots+r_{s_j}f_{j,s_j}= a_1(e_{1,1}f_{j,1}+\ldots+e_{1,s_j}f_{j,s_j})+
\ldots+ a_l(e_{l,1}f_{j,1}+\ldots+e_{l,s_j}f_{j,s_j})
= (a_1e_{1,1}+\ldots+a_le_{l,1})f_{j,1}+\ldots+(a_1e_{1,s_j}+\ldots+a_le_{l,s_j})f_{j,s_j}$.

For all $1 \leq k \leq s_j$, write
$b_k:=a_1e_{1,k}+\ldots+a_le_{l,k} \in I$.

Therefore, 
$r_1f_{j,1}+\ldots+r_{s_j}f_{j,s_j}= b_1f_{j,1}+\ldots+b_{s_j}f_{j,s_j}$.
So,
$(r_1-b_1)f_{j,1}+\ldots+(r_{s_j}-b_{s_j})f_{j,s_j}= 0$.

But $f_{j,1},\ldots,f_{j,s_j}$ are free over $R$, hence 
$(r_1-b_1)=0,\ldots,(r_{s_j}-b_{s_j})=0$, namely
$r_1=b_1 \in I,\ldots,r_{s_j}=b_{s_j} \in I$,
so we have
$r_1+I=0+I,\ldots,r_{s_j}+I=0+I$.

Next, the given 
$F_j \stackrel{\alpha_j}{\rightarrow} F_{j-1}$
are yielding
$F_j/IF_j \stackrel{\beta_j}{\rightarrow} F_{j-1}/IF_{j-1}$,

where
$\beta_j(f_j+IF_j):=\alpha_j(f_j)+IF_{j-1}$.
%%%%%%%%%%%%%%%%%%%%%%%
Notice that $\beta_j$ is well-defined:

$\beta_j(f'_j+IF_j)=\alpha_j(f'j)+IF_{j-1}=\alpha_j(f_j+f)+IF_{j-1}
=\alpha_j(f_j)+\alpha_j(f)+IF_{j-1}= \alpha_j(f_j) +IF_{j-1}= \beta_j(f_j+IF_j)$,
where $f \in IF_j$. 

($f= c_1f_{j,1}+\ldots+c_{s_j}f_{j,s_j}$ with $c_1,\ldots,c_{s_j} \in I$,
so 
$\alpha_j(f)=\alpha_j(c_1f_{j,1}+\ldots+c_{s_j}f_{j,s_j})
=c_1\alpha_j(f{j,1})+\ldots+c_{s_j}\alpha_j(f_{j,s_j}) \in IF_{j-1}$.

It remains to show that 
$0 \rightarrow G_n  \stackrel{\beta_n}{\rightarrow} \ldots \stackrel{\beta_2}{\rightarrow}
G_1 \stackrel{\beta_1}{\rightarrow} G_0 \stackrel{\beta_0}{\rightarrow} M_{R/I} \rightarrow 0$
is a complex, namely: 
$\Img(\beta_{j+1}) \subseteq \Ker(\beta_j)$.

Take
$\beta_{j+1}(w+IF_{j+1}) \in \Img(\beta_{j+1})=\beta_{j+1}(F_{j+1}/IF_{j+1})$,
where $w \in F_{j+1}$.

We know that
$\alpha_{j+1}(w) \in \Img(\alpha_{j+1}) \subseteq \Ker(\alpha_j)$,
hence $\alpha_j(\alpha_{j+1}(w))=0$.

Then
$\beta_j(\beta_{j+1}(w+IF_{j+1}))=\beta_j(\alpha_{j+1}(w)+IF_j)
= \alpha_j(\alpha_{j+1}(w))+IF_{j-1}=0+IF_{j-1}$,
so  
$\Img(\beta_{j+1}) \subseteq \Ker(\beta_j)$.
\end{proof}

Now we move to prove Theorem \ref{plausible}.
\begin{proof}
Write $R=K[P,Q,S]$ and $I=gK[P,Q,S]$ as in Remark \ref{remark}.
It is well-known (see, for example, \cite[Theorem 182, Theorem 183]{kap}) that the polynomial ring 
$K[P,Q,S]$ has the property that every finitely generated $K[P,Q,S]$-module has an FFR. 
In particular, every finitely generated $K[P,Q,S]$-module has a finite complex of
finitely generated and free $K[P,Q,S]$-modules.
$R/I=K[P,Q][x+y]$ is a Noetherian integral domain which, by Lemma \ref{lemma}, has the 
property that every finitely generated $K[P,Q][x+y]$-module has a finite complex of 
finitely generated and free $R$-modules.
Hence if the ``plausible theorem" is true, then $K[P,Q][x+y]$ is normal.
Then Theorem \ref{main thm} says that $f$ is invertible.
\end{proof}

\begin{remark}
Notice that in Lemma \ref{lemma}, $I$ is any ideal of $R$, while in our case 
$I=gK[P,Q,S]$ is a prime ideal of $R=K[P,Q,S]$. Maybe this fact can help in showing that
every finitely generated $K[P,Q][x+y]$-module has an FFR (and not just a finite complex of 
finitely generated and free $K[P,Q][x+y]$-modules).

We wish to prove a new version of Lemma \ref{lemma}, namely: 
Let $R$ be a commutative ring and $I$ a prime ideal of $R$. 
Assume $R$ has the property that every finitely generated $R$-module has an FFR.
Then the quotient ring $R/I$ has the property that every finitely generated $R/I$-module
has an FFR.

However, we were not able to prove this new version, since exactness in the $F_j$'s
seems not to imply exactness in the $G_j$'s.

If this new version is true, then the ``plausible theorem" is not needed,
and $K[P,Q][x+y]$ is a UFD by the already mentioned 
\cite[Theorem 184]{kap} or \cite[Theorem 20.4]{mat}.

\end{remark}
%%%%%%%%%%%%%%%%%%%%%%%%%%%%%%%%%%%%%%%%%%%%%%%%%%%%%%%%%%%%%
%%%%%%%%%%%%%%%%%%%%%%%%%%%
\section{Acknowledgements}
I would like to thank my PhD advisors Prof. Louis Rowen and Prof. Uzi Vishne for enlightening conversations about commutative rings. I would also like to thank Dr. Monique and Mordecai Katz for their Bar-Ilan university's stipends for PhD students and for learning at Bar-Ilan's Midrasha.

\bibliographystyle{plain}

\end{document}